\documentclass[10pt,a4paper]{amsart}

\usepackage{amsmath}
\usepackage{amssymb}
\usepackage{amsthm}
\usepackage{cleveref}
\usepackage{mathtools}

\newtheorem{corollary}{Corollary}[section]
\newtheorem{definition}[corollary]{Definition}

\newtheorem{proposition}[corollary]{Proposition}

\newtheorem{theorem}[corollary]{Theorem}

\DeclareMathOperator*{\cl}{cl}
\DeclareMathOperator*{\co}{co}

\DeclareMathOperator*{\lin}{lin}
\DeclareMathOperator*{\Rea}{Re}

\def\p{\partial}
\def\C{{\mathbb C}}
\def\K{{\mathbb K}}
\def\N{{\mathbb N}}
\def\Q{{\mathbb Q}}
\def\R{{\mathbb R}}
\def\LL{{\mathcal L}}

\setlength{\parindent}{0em} 

\title{Separability and Submetrizability in Locally Convex Spaces}
\author{Thomas Ruf}
\address[Thomas Ruf]{Institut für Geometrie, TU Dresden, 01069 Dresden, Germany}
\email{thomas.ruf@tu-dresden.de, thomas.ruf.math@web.de}


\begin{document}

\maketitle

\begin{abstract}
	We introduce the property of countable separation for a locally convex Hausdorff space $X$ and relate it to the existence of a metrizable coarser topology. Building on this, we demonstrate how the separability of $X$ is equivalent to the existence of a locally convex topology on the dual $X'$ that is metrizable and coarser than the weak topology $\sigma(X', X)$. This result generalizes known conditions for separability and provides a precise duality between separability and metrizability.	We also show how to derive new and known conditions for the separability of $X$ from this characterization.
\end{abstract}

\section{Introduction}

A topological space is said to be separable if it contains a countable dense subset. This property enables the entire space to be approximated by a relatively simple set, making separability a highly desirable property. It is therefore natural to ask whether a given space is separable.

This note provides a new equivalent description of separability for locally convex Hausdorff vector spaces: such a space $X$ is separable if and only if there is a locally convex topology on its continuous dual $X'$ that is metrizable and coarser than the weak topology $\sigma(X', X)$. We refer to this condition by saying that $\sigma(X', X)$ is locally convexly submetrizable. Since $X'$ is also a locally convex Hausdorff space, the dual statement holds symmetrically: $\sigma(X, X')$ is locally convexly submetrizable if and only if $X'$ is separable.

Importantly, the particular choice of the locally convex topology on $X'$ is immaterial for separability as long as the topology is compatible with the duality of $(X, X')$ in the sense of the Mackey-Arens theorem. However, this indifference fails with respect to the topology under which $X$ itself is locally convexly submetrizable. For example, any Banach space whose dual is not weak* separable is norm-metrizable without being submetrizable in its weak topology. Nevertheless, we proceed to show that this discrepancy disappears when $X$ is separable. Accordingly, a separable locally convex Hausdorff space $X$ is locally convexly submetrizable if and only if its dual $X'$ is separable.

Our result has some partial predecessors. For example, \cite[Ex. 8.201, p. 271]{NB} suggests that there is a 'certain duality' between separability and metrizability for locally convex spaces. The reader is then instructed to verify several lemmas that can be easily obtained as corollaries of our main result. Moreover, the easy implication that separability of $\sigma(X', X)$ implies submetrizability of $X$ is stated in \cite[Prop. 2.5.18]{PB}. In its present precision and completeness however, our main result seems to be new.

In \Cref{sec:prem}, we introduce necessary notations and preliminaries. In \Cref{sec:res} we present our results, including the equivalence of separability and countable separation of the dual.

\section{Preliminaries} \label{sec:prem}


All topological vector spaces in this section are over the scalar field $\K \in \left\{ \C, \R \right\}$. Let $\Q$ be a dense countable subfield of $\K$. Every non-trivial subfield is dense in $\R$. We say that a subset $L$ of a $\K$-vector space $X$ is $\Q$-linear if $x_0, x_1 \in C \land \lambda \in \Q \implies \lambda x_0 + x_1 \in L$. The smallest $\Q$-linear set $\lin_\Q(S)$ containing a given subset $S \subset X$ is called the $\Q$-linear hull of $S$ and is given by
\begin{equation} \label{eq:rat lin hull}
	\mathrm{lin}_\Q(S) = \left\{ \sum_{i = 1}^n \lambda_i x_i \colon x_i \in S, \, \lambda_i \in \Q, \, n \in \N \right\}.
\end{equation}
The set $\lin_\Q(S)$ is countable iff $S$ is countable because all indices besides possibly $S$ in \eqref{eq:rat lin hull} are countable and $S \subset \lin_\Q(S)$.

\begin{proposition} \label{prop:hull dens}
	Let $X$ be a topological $\K$-vector space, $\K \in \left\{ \C, \R \right\}$. A linear set $L \subset X$ is separable iff there is a sequence $\left\{ x_1, x_2, \ldots \right\} \subset L$ such that the linear hull $\lin \left\{ x_1, x_2, \ldots \right\}$ is dense in $L$.
\end{proposition}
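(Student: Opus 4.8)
The plan is to prove the two implications separately; the forward implication is immediate, and essentially all the content lies in the reverse one.

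If $L$ is separable, I would take a countable dense subset $D = \left\{ x_1, x_2, \ldots \right\}$ of $L$ and use its enumeration as the desired sequence. Since $D \subset \lin \left\{ x_1, x_2, \ldots \right\}$ and any superset of a dense set is dense, the linear hull is automatically dense in $L$, with no further argument needed.

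For the converse, suppose $S = \left\{ x_1, x_2, \ldots \right\} \subset L$ is a sequence with $\lin(S)$ dense in $L$. The linear hull $\lin(S)$ itself need not be separable a priori, since its defining coefficients range over the uncountable field $\K$. The key idea is therefore to pass to the $\Q$-linear hull $\lin_\Q(S)$, which is countable by the remark following \eqref{eq:rat lin hull} because $S$ is countable. It then suffices to prove that $\lin_\Q(S)$ is dense in $\lin(S)$: combined with the assumed density of $\lin(S)$ in $L$ and the transitivity of density, this exhibits a countable dense subset of $L$ and establishes separability.

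The main obstacle is precisely this density claim, which I would settle using the continuity of the vector space operations. In a topological vector space addition and scalar multiplication are jointly continuous, so for each finite tuple $x_{i_1}, \ldots, x_{i_n}$ drawn from $S$ the map $f \colon \K^n \to X$, $(\lambda_1, \ldots, \lambda_n) \mapsto \sum_{k = 1}^n \lambda_k x_{i_k}$, is continuous. Because $\Q$ is dense in $\K$, so that $\cl \Q^n = \K^n$, continuity yields $f(\K^n) = f(\cl \Q^n) \subset \cl f(\Q^n) \subset \cl \lin_\Q(S)$. Since every element of $\lin(S)$ is a finite linear combination and hence lies in the range of such a map for some finite subtuple, we obtain $\lin(S) \subset \cl \lin_\Q(S)$, which is the required density.
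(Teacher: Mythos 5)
Your proposal is correct and follows the same route as the paper: both reduce to showing that the countable $\Q$-linear hull $\lin_\Q(S)$ is dense in $\lin(S)$ via continuity of the vector space operations together with the density of $\Q$ in $\K$. You merely spell out in detail (the map $f \colon \K^n \to X$ and $f(\cl \Q^n) \subset \cl f(\Q^n)$) what the paper's one-line proof leaves implicit, and in doing so you correctly fix the paper's typo where $\co$ appears in place of $\lin$.
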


\begin{proof}
	$\implies$: trivial. $\impliedby$: The linear hull of a sequence is given by
	$$
	\lin \left\{ x_1, x_2, \ldots \right\} = \left\{ \sum_{i = 1}^n \lambda_i x_i \colon \lambda_i \in \K, \, n \in \N \right\}.
	$$
	It suffices to observe that the countable $\Q$-linear hull $\lin_\Q \left\{ x_1, x_2, \ldots \right\}$ is dense in $\co \left\{ x_1, x_2, \ldots \right\}$ by continuity of scalar multiplication and addition.
\end{proof}

\section{Results} \label{sec:res}

In this section, we give an equivalent characterization of when a locally convex Hausdorff vector space $X$ is separable in terms of a countable separation property and submetrizability of the dual space in its weak topology. Moreover, we prove that the weak topology in this characterization may be replaced by any other locally convex topology on $X'$ that is compatible with the duality of $(X, X')$ if $X'$ is separable. For a set $A \subset X$, we denote by $A^\circ \subset X'$ its polar set, i.e.,
$$
A^\circ = \left\{ x' \in X' \colon \langle x', x \rangle \le 1 \right\}.
$$

\begin{definition} \label{def:cntbl sep}
	Let $X$ be a locally convex Hausdorff vector space. We call $X$ \emph{countably separated} if there is a sequence of (absolutely convex, closed) null neighborhoods $U_n$ with $\bigcap_{n \ge 1} \left\{ x \in X \colon S(U_n^\circ, x) = 0 \right\} = \left\{ 0 \right\}$. Here, $U_n^\circ$ is the polar set of $U_n$ and $S(U_n^\circ, x) = \sup_{x' \in U_n^\circ} \langle x', x \rangle$.
\end{definition}

By the bipolar theorem \cite[Satz VIII.3.9]{We}, the bipolar set $U_n^{\circ \circ}$ agrees with the absolutely convex, closed envelope of $U_n$ and $U_n^\circ = U_n^{\circ \circ \circ}$ so that it is immaterial in \Cref{def:cntbl sep} whether $U_n$ is absolutely convex or closed. Geometrically, it is interesting to note that if $A \subset X$, then the set $\left\{ x \in X \colon S(A^\circ, x) = 0 \right\}$ is a closed, linear subspace since $S(A, \cdot)$ is absolutely homogeneous and subadditive. It is the closed line space of $A^{\circ \circ}$, i.e., the closure of the maximal linear subspace contained in $A^{\circ \circ}$.

\begin{theorem} \label{thm:subm}
	A locally convex Hausdorff vector space $X$ is countably separated iff there is a coarser, metrizable topology $\rho$ on $X$. In this case, it is possible to choose $\rho$ locally convex.
\end{theorem}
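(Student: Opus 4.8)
The plan is to translate the support-function condition in \Cref{def:cntbl sep} into a statement about continuous seminorms, and then to invoke the standard construction of a metric from a countable family of seminorms. The first step is the identification
$$
S(U_n^\circ, x) = \sup_{x' \in U_n^\circ} \langle x', x \rangle = p_{U_n}(x),
$$
where $p_{U_n}$ is the Minkowski gauge of $U_n$ and $\tau$ denotes the original topology of $X$. For an absolutely convex, closed null neighborhood $U_n$ this follows from the bipolar theorem: since $U_n^\circ$ is absolutely convex, $S(U_n^\circ, \cdot)$ is absolutely homogeneous and subadditive, and $x \in t U_n \iff S(U_n^\circ, x) \le t$ for $t > 0$, so the gauge coincides with the support function. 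In particular each $S(U_n^\circ, \cdot)$ is a $\tau$-continuous seminorm whose kernel is exactly $\left\{ x \colon S(U_n^\circ, x) = 0 \right\}$. Hence $X$ is countably separated if and only if there is a countable family of $\tau$-continuous seminorms $(p_n)$ separating points.

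Given this reformulation, the forward implication is routine. I would set
$$
d(x,y) = \sum_{n \ge 1} 2^{-n} \frac{p_n(x - y)}{1 + p_n(x - y)},
$$
which is a translation-invariant metric precisely because the $p_n$ separate points, and whose induced topology $\rho$ is the locally convex topology generated by $(p_n)$. Since each $p_n$ is $\tau$-continuous, the subbasic sets $\left\{ x \colon p_n(x) < \e \right\}$ are $\tau$-open, so $\rho \subseteq \tau$ is coarser; it is metrizable by construction and locally convex, which settles both the existence of $\rho$ and the additional claim that $\rho$ may be chosen locally convex.

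For the converse, suppose $\rho$ is any coarser metrizable topology on $X$. Being metrizable, $\rho$ is first countable and Hausdorff, so there is a countable $\rho$-neighborhood base $(W_n)$ of $0$; as $\rho \subseteq \tau$, each $W_n$ is a $\tau$-neighborhood of $0$, and by local convexity of $\tau$ I can choose absolutely convex, closed $\tau$-neighborhoods $U_n \subseteq W_n$. It remains to check that the seminorms $p_{U_n} = S(U_n^\circ, \cdot)$ separate points. Fixing $x \ne 0$, Hausdorffness of $\rho$ furnishes a $\rho$-open set separating $0$ from $x$, and since $(W_n)$ is a base there is an index $n$ with $x \notin W_n \supseteq U_n$. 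As $U_n$ is closed, $x \notin U_n$ forces $p_{U_n}(x) \ge 1 > 0$, whence $\bigcap_{n} \ker p_{U_n} = \left\{ 0 \right\}$ and $X$ is countably separated.

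I expect the main obstacle to be the converse, and specifically the fact that $\rho$ is only assumed to be a metrizable topology, not a priori a vector or locally convex topology. The argument must therefore extract the separating seminorms purely from the countable neighborhood base at $0$ together with the local convexity of $\tau$, rather than from any linear structure on $\rho$; the clean identity $S(U_n^\circ, x) = p_{U_n}(x)$ and the closedness of $U_n$ — which upgrades $x \notin U_n$ to $p_{U_n}(x) \ge 1$ — are what make this extraction go through.
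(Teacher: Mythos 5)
Your proposal is correct and takes essentially the same route as the paper: the identification of $S(U_n^\circ,\cdot)$ with the Minkowski gauge $p_{U_n}$, the standard metric $d(x,y)=\sum_{n\ge 1}2^{-n}\,p_n(x-y)/(1+p_n(x-y))$ for the forward direction, and, for the converse, refining a countable $\rho$-neighborhood base of $0$ by absolutely convex, closed $\tau$-neighborhoods. The only cosmetic difference is that you verify the separation pointwise (metrizability of $\rho$ gives $T_1$, so $x\ne 0$ yields $x\notin W_n\supseteq U_n$, and closedness of $U_n$ gives $p_{U_n}(x)\ge 1$), whereas the paper expresses the same fact through the chain of inclusions $\bigcap_{n}\left\{ x \colon S(V_n^\circ,x)=0 \right\} \subseteq \bigcap_{n} V_n^{\circ\circ} = \bigcap_{n} V_n = \left\{ 0 \right\}$ via the bipolar theorem.
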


\begin{proof}
	$\implies$: let $(X, \tau)$ be countably separated by a sequence of absolutely convex, closed null neighborhoods $U_n$ with
	$$
	\bigcap_{n \ge 1} \left\{ x \in X \colon S(U_n^\circ, x) = 0 \right\} = \left\{ 0 \right\}.
	$$
	Let $\rho$ be the locally convex topology whose null neighborhood base is given by $\left\{ U_n \right\}$. This is equivalent to the $\rho$-continuity of the seminorms $p_n(x) = \inf\left\{ \lambda > 0 \colon \lambda x \in U_n \right\}$. A metric inducing $\rho$ is given by
	$$
	d(x_1, x_2) = \sum_{n \ge 1} 2^{-n} \frac{p_n(x_1 - x_2)}{1 + p_n(x_1 - x_2)},
	$$
	which is $\tau$-continuous, i.e., $\rho$ is coarser than $\tau$.
	\\
	$\impliedby$: let $\rho$ be a metrizable topology with $\rho \subset \tau$. By metrizability, there is a sequence $U_n$ forming a null $\rho$-neighborhood base. Since every $U_n$ is also a $\tau$-neighborhood and $\tau$ is locally convex, there exist absolutely convex, closed null neighborhoods $V_n \subset U_n$. By the bipolar theorem \cite[Satz VIII.3.9]{We} and since $U_n$ is a null neighborhood base of a Hausdorff topology, we conclude
	$$
	\left\{ 0 \right\} \subset \bigcap_{n \ge 1} \left\{ x \in X \colon S(V_n^\circ, x) = 0 \right\} \subset \bigcap_{n \ge 1} V_n^{\circ \circ} = \bigcap_{n \ge 1} V_n = \left\{ 0 \right\}
	$$
	so that $(X, \tau)$ is countably separated.
\end{proof}

\begin{theorem} \label{thm:sep}
	Let $X$ be a locally convex Hausdorff vector space. The following are equivalent:
	\begin{enumerate}
		
		\item $X$ is separable; \label{it:en sep1}
		
		\item $X'$ is countably $\sigma(X', X)$-separated; \label{it:en sep2}
		
		\item There is a metrizable (locally convex) topology on $X'$ coarser than $\sigma(X', X)$. \label{it:en sep3}
		
	\end{enumerate}
\end{theorem}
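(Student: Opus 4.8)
The plan is to treat the equivalence of \textup{(\ref{it:en sep2})} and \textup{(\ref{it:en sep3})} separately from their equivalence with \textup{(\ref{it:en sep1})}. The pair \textup{(\ref{it:en sep2})}$\Leftrightarrow$\textup{(\ref{it:en sep3})} is immediate from \Cref{thm:subm}: the space $(X', \sigma(X', X))$ is itself locally convex and Hausdorff, the latter because $X$ separates the points of $X'$ (a nonzero continuous functional is nonzero somewhere), so \Cref{thm:subm} applies verbatim with $X$ replaced by $(X', \sigma(X', X))$. It remains to prove \textup{(\ref{it:en sep1})}$\Leftrightarrow$\textup{(\ref{it:en sep2})}. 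Throughout I use that the dual of $(X', \sigma(X', X))$ is canonically $X$, so that the polars $W^\circ$ of subsets $W \subset X'$ appearing in \Cref{def:cntbl sep} are subsets of $X$, and that, by the remark following \Cref{def:cntbl sep}, the set $\left\{ x' \in X' \colon S(W^\circ, x') = 0 \right\}$ is exactly the closed lineality space of the bipolar $W^{\circ \circ}$. For an absolutely convex, $\sigma(X', X)$-closed set $W$ this bipolar is $W$ itself, and I will use that passing to the maximal subspace contained in a set is monotone under set inclusion.

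For \textup{(\ref{it:en sep1})}$\Rightarrow$\textup{(\ref{it:en sep2})}, I would fix a countable dense set $\left\{ x_1, x_2, \ldots \right\} \subset X$ and take $W_n = \left\{ x_1, \ldots, x_n \right\}^\circ$. Each $W_n$ is an absolutely convex, $\sigma(X', X)$-closed null neighborhood, being the polar of a finite set. Its closed lineality space is the annihilator $\left\{ x_1, \ldots, x_n \right\}^\perp$, so $\bigcap_{n} \left\{ x' \colon S(W_n^\circ, x') = 0 \right\}$ is the set of functionals vanishing on every $x_i$. Since $\left\{ x_1, x_2, \ldots \right\}$ is dense and the functionals are continuous, this intersection is $\left\{ 0 \right\}$, which is precisely countable separation of $X'$.

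For the converse \textup{(\ref{it:en sep2})}$\Rightarrow$\textup{(\ref{it:en sep1})}, let $W_n$ witness countable $\sigma(X', X)$-separation. Because each $W_n$ is a weak null neighborhood, it contains a basic one, i.e.\ a polar $F_n^\circ$ of a (suitably scaled) finite set $F_n \subset X$. The maximal subspace contained in $F_n^\circ$ is the annihilator $F_n^\perp$, and monotonicity gives $F_n^\perp \subset \left\{ x' \colon S(W_n^\circ, x') = 0 \right\}$. Intersecting over $n$ and writing $D = \bigcup_n F_n$ for the resulting countable subset of $X$, I obtain $D^\perp \subset \bigcap_n \left\{ x' \colon S(W_n^\circ, x') = 0 \right\} = \left\{ 0 \right\}$. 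Thus $D$ is total, so by Hahn--Banach its linear span $\lin D$ is dense in $X$, and \Cref{prop:hull dens} then yields separability of $X$.

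The routine parts are the polar/annihilator identities and the scaling in passing to basic weak neighborhoods; the one genuine input I would flag is the standard duality fact $(X', \sigma(X', X))' = X$, since the whole argument hinges on the polars $W_n^\circ$ living in $X$ and on Hahn--Banach converting a trivial annihilator into density of the span. With the lineality-space reformulation from the remark in hand no delicate estimate is needed, so I expect the main obstacle to be purely expository: stating the polar bookkeeping cleanly enough that the one-sided polar convention of \Cref{def:cntbl sep} does not obscure the identification of closed lineality spaces with annihilators.
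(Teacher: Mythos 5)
Your proposal is correct and follows essentially the same route as the paper: \textup{(\ref{it:en sep2})}$\Leftrightarrow$\textup{(\ref{it:en sep3})} by applying \Cref{thm:subm} to $(X', \sigma(X', X))$, \textup{(\ref{it:en sep1})}$\Rightarrow$\textup{(\ref{it:en sep2})} via polars built from a dense sequence, and the converse by passing to basic weak null neighborhoods generated by finite sets whose countable union has trivial annihilator, concluding with \Cref{prop:hull dens}. Your annihilator/Hahn--Banach phrasing and the use of polars of finite initial segments are only cosmetic variants of the paper's bipolar-theorem bookkeeping.
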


\begin{proof}
	We know $\ref{it:en sep2} \iff \ref{it:en sep3}$ by \Cref{thm:subm}. It remains to prove $\ref{it:en sep1} \iff \ref{it:en sep2}$. $\implies$: let $\left\{ x_1, x_2, \ldots \right\} \subset X$ be dense. Then $\left\{ x_1, x_2, \ldots \right\}^\circ = \left\{ 0 \right\}$ by the bipolar theorem \cite[Satz VIII.3.9]{We} and hence, for $U_n = \left\{ x' \in X' \colon \langle x', x_n \rangle \le 1 \right\}$, we have that (i) every $U_n$ is an absolutely convex, closed null neighborhood in $\sigma(X', X)$; (ii) there holds
	$$
	\left\{ 0 \right\} \subset \bigcap_{n \ge 1} \left\{ x \in X \colon S(U_n^\circ, x) = 0 \right\} \subset \bigcap_{n \ge 1} U_n^{\circ \circ} = \bigcap_{n \ge 1} U_n = \left\{ 0 \right\}
	$$
	by the bipolar theorem.
	\\
	$\impliedby$: let $U_n$ be a sequence of null neighborhoods in $\sigma(X', X)$ with
	$$
	\bigcap_{n \ge 1} \left\{ x \in X \colon S(U_n^\circ, x) = 0 \right\} = \left\{ 0 \right\}.
	$$
	Every $U_n$ contains an (absolutely convex and closed) null neighborhood $V_n$ in $\sigma(X', X)$ having the form
	$$
	V_n = \bigcap_{i = 1}^k \left\{ x' \in X' \colon \langle x', x_{i, n} \rangle \le \right\}, \quad k = k(n) \in \N,
	$$
	since such sets constitute a base of the weak topology. We have
	$$
	\left\{ 0 \right\} \subset \bigcap_{n \ge 1} \left\{ x' \in X' \colon S(V_n, x') = 0 \right\} \subset \bigcap_{n \ge 1} \left\{ x' \in X' \colon S(U_n^\circ, x') = 0 \right\} = \left\{ 0 \right\}.
	$$
	Consequently, if $\langle x', x_{i, n} \rangle = 0$ for all $(i, n)$, then $x' = 0$, so that the countable collection $x_{i, n}$ has a trivial polar set, i.e., it its absolutely convex hull is dense in $X$ by the bipolar theorem. We conclude $X$ separable by \Cref{prop:hull dens}.
\end{proof}

If $X$ is a locally convex Hausdorff space, its dual $X'$ shares this property so that the theorem still applies if $X$ and $X'$ switch roles. In contrast to separability, if a space is countably separable, then it need not be so in every other locally convex topology that is compatible with the given duality. For example, let $X$ be a non-separable, reflexive Banach space. Then $X$ is countably separated in norm topology but $X'$ is not separable. Thus, by \Cref{thm:sep}, the weak topology on $X$ cannot be countably separated. However, the dependence on the particular topology for countable separation becomes ostensible if the space is separable:

\begin{theorem}	\label{thm:main}
	Let $X$ be a separable locally convex Hausdorff vector space. If $X$ is countably separated, then $X'$ is separable.
\end{theorem}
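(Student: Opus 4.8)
The plan is to reduce the assertion to the construction of a single countable, $\sigma(X',X)$-total subset of $X'$ and then to quote \Cref{prop:hull dens}. First I would rewrite the countable separation hypothesis in dual terms. Since each $U_n$ may be taken absolutely convex, its polar $U_n^\circ$ is absolutely convex, hence symmetric, so $S(U_n^\circ, x) = \sup_{x' \in U_n^\circ} \langle x', x\rangle = 0$ holds precisely when $\langle x', x\rangle = 0$ for every $x' \in U_n^\circ$. Thus $\left\{ x \colon S(U_n^\circ, x) = 0 \right\}$ is the pre-annihilator in $X$ of $U_n^\circ$, and by the bipolar theorem a linear subspace of $X'$ has trivial pre-annihilator iff it is $\sigma(X', X)$-dense. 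The defining condition $\bigcap_{n} \left\{ x \colon S(U_n^\circ, x) = 0 \right\} = \left\{ 0 \right\}$ therefore says exactly that $\lin \bigl( \bigcup_{n} U_n^\circ \bigr)$ is $\sigma(X', X)$-dense in $X'$, i.e. that $\bigcup_{n} U_n^\circ$ is total. I would fix a sequence $U_n$ of null neighborhoods realizing this.

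The core of the argument is to replace the (generally uncountable) total set $\bigcup_{n} U_n^\circ$ by a countable one, and this is exactly where separability of $X$ enters. Each $U_n^\circ$ is equicontinuous, being the polar of a null neighborhood, so I would invoke the classical fact that on an equicontinuous subset of $X'$ the weak topology $\sigma(X', X)$ coincides with the topology of pointwise convergence on any dense subset of $X$. Choosing a countable dense set $\left\{ d_1, d_2, \ldots \right\} \subset X$, the evaluation map $x' \mapsto (\langle x', d_k\rangle)_{k}$ then embeds $(U_n^\circ, \sigma(X', X))$ homeomorphically into the separable metrizable space $\K^{\N}$; since second countability is hereditary, each $U_n^\circ$ is $\sigma(X', X)$-separable. (Equivalently, $U_n^\circ$ is weak* compact by Banach--Alaoglu--Bourbaki and metrizable by the same coincidence, hence separable.) I expect this coincidence-of-topologies step to be the main obstacle, since it is the only point at which the separability hypothesis is consumed, and it rests on the uniform control over $U_n^\circ$ supplied by equicontinuity together with density of $\left\{ d_k \right\}$.

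It then remains to assemble a countable total set. For each $n$ I would pick a countable $\sigma(X', X)$-dense subset $D_n \subset U_n^\circ$ and set $D = \bigcup_{n} D_n$, which is countable. Because $U_n^\circ \subset \cl_{\sigma(X', X)}(D_n)$ for every $n$, the span of $\bigcup_{n} U_n^\circ$ lies in the $\sigma(X', X)$-closure of $\lin D$; as the former is weak* dense, so is $\lin D$. Hence $\lin D$ is $\sigma(X', X)$-dense in $X'$, and applying \Cref{prop:hull dens} to the topological vector space $(X', \sigma(X', X))$ with $L = X'$ yields that $X'$ is $\sigma(X', X)$-separable. Finally, since separability is invariant among the topologies compatible with the duality of $(X, X')$, this gives separability of $X'$ in each such topology. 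One could alternatively phrase this last step through the symmetric form of \Cref{thm:sep}: producing the countable total set $D$ is precisely the statement that $X$ is countably $\sigma(X, X')$-separated, which by that theorem (with $X$ and $X'$ interchanged) is equivalent to separability of $X'$.
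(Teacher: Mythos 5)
Your proof is correct, and it takes a genuinely different route from the paper's. You reformulate countable separation as totality of $\bigcup_n U_n^\circ$ in $X'$ (via balancedness of the polars and the bipolar theorem) and then reduce everything to one classical fact: on an equicontinuous subset of $X'$ the topology $\sigma(X', X)$ coincides with pointwise convergence on a dense subset of $X$, so each polar $U_n^\circ$ embeds homeomorphically into $\K^\N$ and is therefore weak* separable; the union of countable dense subsets of the polars is then a countable total set, and \Cref{prop:hull dens} (or, as you note, \Cref{thm:sep} with the roles of $X$ and $X'$ interchanged) finishes. The paper instead works in the Mackey topology $\mu(X, X')$: it shows $S(U_n^\circ, \cdot)$ is $\mu$-continuous, projects the separable set $X \setminus \left\{ S(U_n^\circ, \cdot) = 0 \right\}$ onto the boundary $\p U_n$ to obtain a dense sequence there, selects almost-norming functionals $x'_{m, n} \in U_n^\circ$ with $\Rea x'_{m, n}(x_{m, n}) > \tfrac{1}{2}$, and proves totality of this explicit countable family by a quantitative $\tfrac{1}{2}$-versus-$\tfrac{1}{4}$ contradiction. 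Both arguments consume separability of $X$ exactly once --- you through the dense set feeding the equicontinuity lemma, the paper through the dense sequence on $\p U_n$. Your version is shorter, avoids the Mackey topology and boundary projections entirely, and yields the stronger local conclusion that every polar of a null neighborhood is weak* metrizable and separable (essentially the mechanism behind \Cref{cor:sep ball}); the paper's version is self-contained modulo the bipolar and Alaoglu theorems and produces an explicit countable total family. Two small points worth making explicit: in the complex case the equivalence $S(U_n^\circ, x) = 0 \iff \langle x', x \rangle = 0$ for all $x' \in U_n^\circ$ needs balancedness of the polar (supplied by absolute convexity of $U_n$), not mere symmetry; and injectivity of your evaluation map on $U_n^\circ$ uses that its elements are continuous functionals, hence determined by their values on a dense subset of $X$.
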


\begin{proof}
	Since a convex set is closed iff it is closed in the Mackey topology $\mu(X, X')$ by \cite[Satz VIII.3.5]{We} and the Mackey-Arens theorem, we may even assume that $X$ is separable in the Mackey topology $\mu(X, X')$. Suppose $U$ is an absolutely convex, closed, $\mu(X, X')$-neighborhood. The Banach-Alaoglu theorem \cite[Thm. VIII.3.11]{We} implies that the polar set
	$$
	U^\circ = \left\{ x' \in X' \colon \langle x', x \rangle \le 1 \quad \forall x \in U \right\}
	$$
	is compact in the weak topology $\sigma(X', X)$. Therefore, the support function $S(U^\circ, x) \coloneqq \max_{x' \in U^\circ} \langle x', x \rangle$ for $x \in X$, is $\mu(X, X')$-continuous by definition of $\mu(X, X')$ as the topology generated by the support functions of absolutely convex, weakly compact sets. The set $U$ being absolutely convex and closed, the bipolar theorem \cite[Satz VIII.3.9]{We} shows that $U = U^{\circ \circ}$, which is the same as
	$$
	U = \left\{ x \in X \colon S(U^\circ, x) \le 1 \right\}.
	$$
	In particular, we see that
	\begin{equation} \label{eq:bdry desc}
		\p U = \left\{ x \in X \colon S(U^\circ, x) = 1 \right\}.
	\end{equation}
	Remember that the intersection of a dense set with an open set in a topological space is dense in the open set. In particular, if $X$ is separable, then $X \setminus \left\{ S(U^\circ, \cdot) = 0 \right\}$ is so, too. Moreover, taking continuous images preserves denseness. Thus, by considering the $\mu(X, X')$-continuous mapping
	$$
	P_U \colon x \in X \setminus \left\{ S(U^\circ, \cdot) = 0 \right\} \to \p U \colon x \mapsto \frac{x}{S(U^\circ, x) },
	$$
	which projects its domain onto $\p U$, we see that $\p U$ is separable:
	\begin{equation} \label{eq:den seq}
		\text{There is a dense sequence } \left\{ x_1, x_2, \ldots \right\} \text{ that is dense in } \p U.
	\end{equation}
	Now, let $U_n \subset X$ form a sequence of absolutely convex closed neighborhoods of the origin with
	$$
	\bigcap_{n \ge 1} \left\{ x \in X \colon S(U_n^\circ, x) = 0 \right\} = \left\{ 0 \right\}.
	$$
	Every neighborhood $U_n$ being a $\mu(X, X')$-neighborhood by the Mackey-Arens theorem, our consideration \eqref{eq:den seq} applies to every neighborhood $U_n$ to yield a sequence $\left\{ x_{1, n}, x_{2, n}, \ldots \right\}$ that is dense in $\p U_n$. Choose $x'_{m, n} \in U_n^\circ$ with
	$$
	\Rea x'_{m, n}(x_{m, n}) > \frac{1}{2} \quad \forall (n, m) \in \N \times \N.
	$$
	Set
	$$
	\LL = \lin \left\{ x'_{1, 1}, x'_{1, 2}, x'_{2, 2}, x'_{1, 3}, x'_{2, 3}, x'_{3, 3}, \ldots \right\} \subseteq S.
	$$
	To finish the proof, it suffices to show that $\LL$ is dense in $X'$ by \Cref{prop:hull dens}. Suppose that $x \in X$ with $\left. x \right|_\LL = 0$. If $x \ne 0$, then there is $U_n$ with $S(U_n^\circ, x) > 0$ so that we may replace $x$ with its projection $P_{U_n} x$ onto $\p U_n$. By denseness, there is a subnet of the sequence $\left\{ x_{1, n}, x_{2, n}, \ldots \right\}$ converging to $x$. In particular, the sequence of real numbers
	$$
	m \mapsto S \left( U_n^\circ, x_{m, n} - x \right)
	$$
	has zero as an accumulation point by the continuity of $S(U_n^\circ, \cdot)$ in $\mu(X, X')$. Therefore, remembering \eqref{eq:bdry desc}, we find $x_{m_0, n}$ with $S(U_n^\circ, x_{m_0, n} - x ) \le \tfrac{1}{4}$. Consequently, we arrive at the contradiction
	$$
	\frac{1}{2} 
	< \Rea x'_{m, n}(x_{m, n})
	= \Rea x'_{m, n}(x_{m, n} - x)
	\le S \left( U_n^\circ, x_{m_0, n} - x \right)
	\le \tfrac{1}{4}
	$$
	so that $x = 0$. Invoking the bipolar theorem, we find the weak denseness $\cl \LL = X'$.
\end{proof}

\Cref{thm:main} readily implies an ostensibly more general version of itself:

\begin{corollary} \label{cor:main}
	Let $X$ be a separable locally convex Hausdorff vector space and $S \subset X'$ be a linear separating subspace. Suppose that $X$ is countably separated by a sequence of absolutely convex $\mu(X, S)$-closed null neighborhoods. Then $S$ is separable and (weakly) dense in $X'$. In particular, $X'$ itself is separable.
\end{corollary}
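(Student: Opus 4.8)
The plan is to read this corollary as \Cref{thm:main} applied to the dual pair $(X, S)$ in place of $(X, X')$. Because $S$ separates the points of $X$, the pair $(X, S)$ is a genuine dual pair and the Mackey topology $\mu(X, S)$ turns $X$ into a locally convex Hausdorff space whose continuous dual is exactly $S$, by the Mackey-Arens theorem; Hausdorffness is immediate from $S$ being separating. Under this topology the hypotheses read precisely as the statement that $(X, \mu(X, S))$ is countably separated by the given absolutely convex, $\mu(X,S)$-closed null neighborhoods $U_n$, whose polars $U_n^\circ$ now live in $S$. Thus, provided I can show that $X$ remains separable in $\mu(X, S)$, \Cref{thm:main} applies verbatim and produces a countable subset of $S$ whose linear hull is $\sigma(S, X)$-dense, i.e. $S$ is weakly separable.

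The main obstacle is exactly this transfer of separability, since $\mu(X, S)$ may be strictly finer than the original topology of $X$ and separability is not inherited by arbitrary refinements. I would overcome it in two steps, each kept inside a single duality. First, as $S \subseteq X'$ we have $\sigma(X, S) \subseteq \sigma(X, X') \subseteq \tau$, and denseness only improves under coarsening, so a countable $\tau$-dense set is $\sigma(X, S)$-dense and $X$ is $\sigma(X, S)$-separable. Second, I pass from $\sigma(X, S)$ up to $\mu(X, S)$ using that both are compatible with the duality $(X, S)$: by \cite[Satz VIII.3.5]{We} together with the Mackey-Arens theorem, a convex set has the same closure in all compatible topologies, so the subspace spanned by a $\sigma(X, S)$-dense sequence is also $\mu(X, S)$-dense, and \Cref{prop:hull dens} yields $\mu(X, S)$-separability. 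This is exactly the reduction carried out at the beginning of the proof of \Cref{thm:main}, only now for the smaller duality.

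It then remains to upgrade weak separability of $S$ to the claimed denseness in $X'$. For this I would invoke the bipolar theorem in the pairing $(X, X')$: the $\sigma(X', X)$-closure of the subspace $S$ is its bipolar, and since $S$ separates the points of $X$ its annihilator $\left\{ x \in X \colon \langle s, x \rangle = 0 \ \forall s \in S \right\}$ reduces to $\left\{ 0 \right\}$, whence $\cl S = X'$ in $\sigma(X', X)$. Finally, a countable $\sigma(S, X)$-dense subset of $S$ is $\sigma(X', X)$-dense in $S$, hence by transitivity $\sigma(X', X)$-dense in all of $X'$, so $X'$ is weakly separable and, by the same convex-closure argument now applied to the duality $(X', X)$, separable in every compatible topology. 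The only routine points I anticipate are the bookkeeping of which polar lands in $S$ versus $X'$ and the verification that $\mu(X, S)$ is Hausdorff.
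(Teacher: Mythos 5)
Your proposal is correct and follows essentially the same route as the paper: apply \Cref{thm:main} to the dual pair $(X, S)$ to get $\sigma(S, X)$-separability of $S$, then use the bipolar theorem and the fact that $S$ separates points to conclude $S$ is weakly dense in $X'$, hence $X'$ is separable. The extra details you supply --- Hausdorffness of $\mu(X, S)$, the transfer of separability from $\tau$ through $\sigma(X, S)$ up to $\mu(X, S)$ via invariance of convex closures under compatible topologies, and the transitivity argument upgrading weak separability of $S$ to that of $X'$ --- are exactly the steps the paper leaves implicit (the separability transfer being the reduction already performed at the start of the proof of \Cref{thm:main}).
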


\begin{proof}
	We apply \Cref{thm:main} to the dual pair $(X, S)$ to find that $S$ is $\sigma(S, X)$-separable. The claim follows upon observing that, by the bipolar theorem, $S$ is (weakly) dense in $X'$ since it separates points.
\end{proof}

We conclude by showing how several lemmas on separability follow from \Cref{thm:main}. While \Cref{cor:sep ball} and \Cref{cor:sep dual} are known, \Cref{cor:sep sub} generalizes a corresponding result where metrizability instead of submetrizability is required. 

\begin{corollary} \label{cor:sep sub}
	Let the locally convex Hausdorff vector space $X$ be separable and submetrizable. Then $X'$ is separable.
\end{corollary}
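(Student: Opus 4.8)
The plan is to recognize this corollary as an immediate consequence of the two preceding theorems, with the only genuine work being to translate the hypothesis of submetrizability into the language of countable separation. Since $X$ is submetrizable, there is by definition a metrizable topology $\rho$ on $X$ that is coarser than the given topology. \Cref{thm:subm} asserts precisely that the existence of such a $\rho$ is equivalent to $X$ being countably separated. Hence the submetrizability assumption hands us, essentially for free, a sequence of absolutely convex, closed null neighborhoods $U_n$ witnessing that $X$ is countably separated.

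Having secured countable separation, I would simply invoke \Cref{thm:main}, which states that a separable locally convex Hausdorff space that is countably separated has a separable dual. As $X$ is separable by hypothesis and countably separated by the previous step, the conclusion that $X'$ is separable follows at once. No intermediate construction is needed: the two theorems chain together directly once the hypothesis is rephrased.

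The main---indeed essentially the only---obstacle is already absorbed into \Cref{thm:main}, whose proof carries the analytic content (the projection onto $\p U_n$, the Banach--Alaoglu compactness of the polars, and the density argument). Here the entire point is the clean equivalence furnished by \Cref{thm:subm}, which lets us weaken the classical hypothesis of metrizability to mere submetrizability without extra effort. In particular, I would never need to exhibit a metric explicitly: \Cref{thm:subm} converts a coarser metrizable topology into a countable family of null neighborhoods whose associated support functions separate the points of $X$, and this is exactly the input that \Cref{thm:main} consumes.
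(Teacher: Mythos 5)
Your proposal is correct and coincides exactly with the paper's own proof, which simply combines \Cref{thm:subm} (submetrizability is equivalent to countable separation) with \Cref{thm:main}. You have merely spelled out the one-line chaining in more detail, with no substantive difference in approach.
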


\begin{proof}
	Combine \Cref{thm:subm} with \Cref{thm:main}.
\end{proof}

\begin{corollary} \label{cor:sep ball}
	A normed space $X$ is separable if and only if the closed unit ball of its dual $B_{X'}$ is weak* metrizable.
\end{corollary}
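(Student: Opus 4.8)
The plan is to recognize this corollary as the normed-space shadow of \Cref{thm:sep}, the bridge being the weak* compactness of the dual ball. The crucial topological observation is that on a weak* compact set submetrizability and metrizability coincide: if a compact Hausdorff space carries a coarser metrizable (hence Hausdorff) topology, then the identity map is a continuous bijection from a compact space onto a Hausdorff space and therefore a homeomorphism, so the two topologies agree. Since $B_{X'}$ is $\sigma(X', X)$-compact by the Banach--Alaoglu theorem \cite[Thm. VIII.3.11]{We}, it follows that $B_{X'}$ is weak* metrizable precisely when the weak* topology restricted to $B_{X'}$ is submetrizable. My task thus reduces to relating submetrizability of $(B_{X'}, \sigma(X', X))$ to submetrizability of the full space $(X', \sigma(X', X))$, at which point \Cref{thm:sep} supplies the equivalence with separability of $X$.

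For the forward implication I would start from separability of $X$ and invoke \Cref{thm:sep} to obtain a locally convex metrizable topology $\rho$ on $X'$ that is coarser than $\sigma(X', X)$. Restricting $\rho$ to $B_{X'}$ yields a metrizable topology that is still coarser than the compact Hausdorff topology $\sigma(X', X)|_{B_{X'}}$; by the observation above the two coincide, so $B_{X'}$ is weak* metrizable. This direction is essentially automatic once \Cref{thm:sep} and the compactness of the ball are in hand.

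The reverse implication is where the work lies. Assuming $B_{X'}$ weak* metrizable, I would exploit first countability to fix a countable $\sigma(X', X)$-neighborhood base $\left\{ V_n \right\}$ of the origin in $B_{X'}$. Each $V_n$ contains a basic weak* neighborhood carved out by finitely many vectors $x_{1, n}, \ldots, x_{k_n, n} \in X$, and I collect all of these into a single countable set $D \subset X$. If $x' \in X'$ annihilates $D$, then after normalizing we may assume $x' \in B_{X'}$, whence $x'$ lies in every $V_n$ and therefore equals $0$ because $\left\{ V_n \right\}$ is a base at a point of a Hausdorff space. Thus $D$ separates $X'$, so by the Hahn--Banach theorem (equivalently the bipolar theorem \cite[Satz VIII.3.9]{We}) the closed linear span of $D$ is all of $X$; since $D$ is countable, \Cref{prop:hull dens} yields separability of $X$. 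Alternatively, the seminorms $x' \mapsto \left| \langle x', x \rangle \right|$ for $x \in D$ generate a coarser metrizable locally convex Hausdorff topology on all of $X'$, so that \Cref{thm:sep} delivers separability directly.

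The main obstacle I anticipate is exactly this reverse direction: extracting from metrizability of the ball a \emph{countable} family in $X$ that separates the dual, and then upgrading ``separates $X'$'' to ``spans a dense subspace of $X$''. The normalization step---reducing an arbitrary annihilating functional to one in $B_{X'}$---is what lets the neighborhood-base information about the ball control all of $X'$, and the Hahn--Banach duality is what converts point-separation in $X'$ into \emph{density in $X$} rather than merely weak* density in $X'$.
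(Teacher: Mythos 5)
Your proposal is correct and takes essentially the same route as the paper: the forward direction is verbatim the paper's argument (restrict the coarser metrizable topology from \Cref{thm:sep} to the Banach--Alaoglu-compact ball and use that a continuous bijection from a compact space to a Hausdorff space is a homeomorphism), and the reverse direction likewise funnels a countable neighborhood base of $0$ in $B_{X'}$, refined to basic weak* neighborhoods, back into \Cref{thm:sep}. The only immaterial difference is which equivalent condition of \Cref{thm:sep} you target in the reverse direction: the paper verifies countable separation (condition (2)) via absolutely convex closed neighborhoods, whereas you extract a countable separating set $D \subset X$ and conclude either directly through the bipolar theorem and \Cref{prop:hull dens} or through the coarser metrizable seminorm topology (condition (3)).
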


\begin{proof}
	$\implies$: by \Cref{thm:sep}, a normed space $X$ is separable if and only if the weak* topology $\sigma(X', X)$ is submetrizable. Let $\rho$ be the metrizable topology that is coarser than $\sigma(X', X)$. The identical embedding $\left( B_{X'}, \sigma(X', X) \right) \to \left( B_{X'}, \rho \right)$ has a continuous inverse since it maps continuously from a compact space into a Hausdorff space. Consequently, the weak* topology agrees with $\rho$ on $B_{X'}$ and hence is metrizable.
	\\
	$\impliedby$: if $B_{X'}$ is weak* metrizable, then there is a countable neighborhood base of zero in $B_{X'}$. Every base element is given by the intersection of some neighborhood in $\sigma(X', X)$ with $B_{X'}$. By possibly decreasing each base element, we may assume that the neighborhoods in $\sigma(X', X)$ are absolutely convex and closed. Since their (absolutely convex) intersection does not meet $B_{X'}$ except at the origin, these neighborhoods separate $\sigma(X', X)$ countably. We conclude that $X$ is separable by \Cref{thm:sep}.
\end{proof}

\begin{corollary} \label{cor:sep dual}
	A normed space $Y$ is separable if the dual $Y'$ is norm separable.
\end{corollary}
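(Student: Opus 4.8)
The plan is to establish, directly for the dual pair $(Y, Y')$, the countable separation property that \Cref{thm:sep} equates with separability of $Y$. Concretely, I will convert a norm-dense sequence in $Y'$ into a countable family $\{y_n\} \subset Y$ that separates the points of $Y'$, and then read off countable $\sigma(Y', Y)$-separation of $Y'$. I deliberately avoid the superficially attractive ``bidual'' route, in which one applies \Cref{cor:sep sub} to the separable, metrizable space $Y'$ to obtain separability of $(Y')' = Y''$ and then restricts to the canonical image $Y \hookrightarrow Y''$: the separability furnished by \Cref{thm:main} is only with respect to $\sigma(Y'', Y')$, and this weak separability is not inherited by the (non-metrizable) subspace $Y$. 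Indeed, for $Y = c_0$ one has $Y'' = \ell^\infty$, which is not norm separable even though $c_0$ is, so there is no norm-separable bidual available to descend from.

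First I would fix a sequence $\{f_1, f_2, \dots\}$ that is norm-dense in $Y'$ and, for each $n$, choose $y_n \in Y$ with $\|y_n\| \le 1$ and $\Rea \langle f_n, y_n \rangle > \tfrac{1}{2}\|f_n\|$; such a vector exists by the definition of the dual norm, after multiplying by a suitable unimodular scalar to make the pairing real and positive. I then set $U_n = \{ x' \in Y' : |\langle x', y_n \rangle| \le 1 \}$, an absolutely convex, $\sigma(Y', Y)$-closed null neighborhood whose polar $U_n^\circ \subset Y$ is, by the bipolar theorem, the closed absolutely convex hull of $\{y_n\}$; hence $S(U_n^\circ, x') = |\langle x', y_n \rangle|$ and $\{ x' \in Y' : S(U_n^\circ, x') = 0\} = \{ x' \in Y' : \langle x', y_n \rangle = 0 \}$.

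The heart of the argument is to verify $\bigcap_{n \ge 1} \{ x' \in Y' : \langle x', y_n \rangle = 0 \} = \{0\}$, that is, that the $y_n$ separate the points of $Y'$. Suppose $x' \in Y'$ annihilates every $y_n$ and pick a subsequence with $f_{n_k} \to x'$ in norm, so that $\|f_{n_k}\| \to \|x'\|$. Then $|\langle f_{n_k}, y_{n_k}\rangle| = |\langle f_{n_k} - x', y_{n_k}\rangle| \le \|f_{n_k} - x'\| \to 0$, while the choice of $y_{n_k}$ forces $|\langle f_{n_k}, y_{n_k}\rangle| > \tfrac12\|f_{n_k}\| \to \tfrac12\|x'\|$; comparing the two gives $\|x'\| = 0$. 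This is the only step that uses the specific construction of the $y_n$, and I expect it to be the main obstacle — everything else is bookkeeping with polars — although it is notably more elementary than the usual Hahn--Banach argument, which instead shows directly that the closed linear span of the $y_n$ exhausts $Y$.

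With the intersection identity in hand, $(Y', \sigma(Y', Y))$ is countably separated in the sense of \Cref{def:cntbl sep}, so \Cref{thm:sep} immediately yields separability of $Y$ in its norm topology. Equivalently, one may bypass \Cref{thm:sep} and observe that the trivial annihilator makes the closed linear span of $\{y_n\}$ all of $Y$ — the weak and norm closures of this subspace coinciding, as for any convex set — whence \Cref{prop:hull dens} applies directly.
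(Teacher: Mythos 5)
Your proof is correct, but it takes a genuinely different route from the paper's. The paper derives this corollary from \Cref{cor:main}: it sets $X = Y'$ and $S = Y \subset Y''$, observes that $X$ is countably separated by its $\sigma(Y', Y)$-closed unit ball, concludes that $S = Y$ is $\sigma(Y, Y')$-separable and weakly dense in $Y''$, and finishes by upgrading weak to norm separability via the coincidence of weak and norm closures of convex sets. You instead recast the classical textbook argument in the language of \Cref{def:cntbl sep}: near-attainment of the dual norm yields $y_n$ with $\Rea\langle f_n, y_n\rangle > \tfrac{1}{2}\|f_n\|$, your subsequence estimate shows the $y_n$ separate the points of $Y'$, and the sets $U_n = \left\{ x' \in Y' \colon |\langle x', y_n\rangle| \le 1 \right\}$ then witness countable $\sigma(Y', Y)$-separation, so \Cref{thm:sep} applies; your computation $U_n^\circ = \left\{ \lambda y_n \colon |\lambda| \le 1 \right\}$ and $S(U_n^\circ, x') = |\langle x', y_n\rangle|$ is right, as is your alternative finish via the trivial annihilator, the bipolar theorem and \Cref{prop:hull dens}. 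What each approach buys: yours is elementary and self-contained, needing neither \Cref{thm:main} nor Banach--Alaoglu, only the definition of the dual norm plus \Cref{thm:sep}; the paper's proof, by contrast, deliberately exhibits the classical lemma as an instance of its new machinery, which is the stated purpose of the section and which your route bypasses (amusingly, your choice of $y_n$ mirrors the choice of $x'_{m,n} \in U_n^\circ$ with $\Rea x'_{m,n}(x_{m,n}) > \tfrac{1}{2}$ inside the proof of \Cref{thm:main}, run in the opposite direction of the duality). Two small remarks: first, your strict inequality fails when $f_n = 0$, so use $\ge \tfrac{1}{2}\|f_n\|$ or discard zero from the dense sequence --- the limiting argument is unaffected; second, your warning about the bidual route is apt but aimed at a strawman: the paper does not descend from $Y''$, since \Cref{cor:main} is formulated exactly so that the conclusion is separability of $S = Y$ itself rather than mere $\sigma(Y'', Y')$-separability of $Y''$, which is how the pitfall you illustrate with $Y = c_0$, $Y'' = \ell^\infty$ is evaded.
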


\begin{proof}
	In \Cref{cor:main}, consider $X = Y'$ and $S = Y \subset Y''$. Then $X$ is countably separated by its $\sigma(X, S)$-closed unit ball so that $S$ is weakly dense in $X'$. Since the trace of the weak* topology $\sigma(Y'', Y')$ agrees with the weak topology $\sigma(Y, Y')$ on $Y$, we conclude that $Y$ is weakly separable. Therefore, it is norm separable by \cite[Satz VIII.3.5]{We}.
\end{proof}

\end{document}